\providecommand{\given}{}
\newcommand{\SetSymbol}[1][]{%
      #1 :
      \allowbreak
      \mathopen{}
}
\newcommand{\defining}{\emph}			
\newcommand{\defn}{\coloneqq}			
\DeclareMathOperator{\chr}{\chi}			
\DeclarePairedDelimiter{\parens}{\lparen}{\rparen}		
\DeclarePairedDelimiter{\floor}{\lfloor}{\rfloor}		
\DeclarePairedDelimiter{\ceil}{\lceil}{\rceil}			
\DeclarePairedDelimiter{\card}{\lvert}{\rvert}			
\DeclarePairedDelimiterX\Set[1]\{\}%
		\renewcommand\given{\SetSymbol[\delimsize]}
\theoremstyle{plain}
	\newtheorem*{theorem*}{Theorem}
	\newtheorem{theorem}{Theorem}[section]
\renewcommand{\MR}[1]{ \href{https://mathscinet.ams.org/mathscinet-getitem?mr=MR#1}{MR#1}}	
\newcommand{\Zbl}[1]{ \href{https://zbmath.org/?q=an:#1}{Zbl~#1}}	
\newcommand{\JFM}[1]{ \href{https://zbmath.org/?q=an:#1}{JFM~#1}}	
\title{Note on $4$-coloring $6$-regular triangulations on the torus}
\author{Brahadeesh Sankarnarayanan}
\address{Department of Mathematics, Indian Institute Technology Bombay, Powai, Mumbai 400076, Maharashtra, India.}
\email{bs@math.iitb.ac.in}
\date{June 3, 2021} 
\thanks{Research supported by the National Board for Higher Mathematics (NBHM),
Department of Atomic Energy (DAE), Govt.\ of India.\\
This preprint has not undergone peer review or any post-submission improvements or corrections. The Version of Record of this article is published in \emph{Ann Comb} \textbf{26}(3), 559--569 (2022), and is available online at \url{https://doi.org/10.1007/s00026-022-00573-8}.}
\subjclass{Primary 05C15; Secondary 05C10, 05C75} 
\keywords{Chromatic number, toroidal graphs, triangulations, regular graphs}
\begin{document}

\begin{abstract}
	In 1973, Altshuler~\cite{Alt73} characterized the $6$-regular triangulations on the torus to be
	precisely those that are obtained from a regular triangulation of the $r \times s$
	toroidal grid where the vertices in the first and last column are
	connected by a shift of $t$ vertices. Such a graph is denoted $T(r, s, t)$.
	
	In 1999, Collins and Hutchinson~\cite{ColHut99} classified the $4$-colorable
	graphs $T(r, s, t)$ with $r, s \geq 3$. In this paper, we point out a gap in their 
	classification and show how it can be fixed.
	Combined with the classification of the $4$-colorable graphs $T(1, s, t)$
	by Yeh and Zhu~\cite{YehZhu03} in 2003, this completes
	the characterization of the colorability of all the $6$-regular triangulations on the torus.
\end{abstract}

\maketitle

\section{Introduction}\label{S:Introduction}

A classical result due to Heawood~\cite{Hea90} states that the chromatic number
of any graph embeddable on an orientable surface of genus \(g > 0\) is bounded
above by the so-called \defining{Heawood number} \(H(g) \defn \floor{(7 + \sqrt{1 + 48g})/2}\).
Heawood also showed that this upper bound is tight for \(g = 1\) by exhibiting an embedding of \(K_{7}\)
in the torus. In fact, \(K_{7}\) embeds in the torus as a triangulation, that is,
an embedding in which each face is homeomorphic to a disc and is bounded by three edges.
It follows from Euler's formula that every regular triangulation
on the torus (of which \(K_{7}\) is one example) is necessarily \(6\)-regular,
and that every \(6\)-regular graph embeddable on the torus is necessarily
a triangulation.

Altshuler~\cite{Alt73} classified the \(6\)-regular triangulations of the torus
as follows (we follow the notation in~\cite{BalSan21}).
For integers \(r \geq 1\), \(s \geq 1\) and \(0 \leq t \leq s - 1\),
take \(V = \Set{ (i, j) \given 1 \leq i \leq r, 1 \leq j \leq s }\)
to be the vertex set of the graph \(T(r, s, t)\) equipped with the following edges:
\begin{itemize}
	\item For each \(1 < i < r\), \((i, j)\) is adjacent to
	\((i, j \pm 1)\), \((i \pm 1, j)\) and
	\((i \pm 1, j \mp 1)\).
	
	\item If \(r > 1\), \((1, j)\) is adjacent to \((1, j \pm 1)\), \((2, j)\),
	\((2, j - 1)\), \((r, j + t + 1)\) and \((r, j + t)\). 
	
	\item If \(r > 1\), \((r, j)\) is adjacent to \((r, j \pm 1)\),
	\((r - 1, j + 1)\), \((r - 1, j)\), \((1, j - t)\) and \((1, j - t - 1)\).
	
	\item If \(r = 1\), \((1, j)\) is adjacent to \((1, j \pm 1)\), \((1, j \pm t)\)
	and \((1, j \pm (t + 1))\).
\end{itemize}
Here, addition in the first coordinate is taken modulo \(r\) and
in the second coordinate is taken modulo \(s\).
\Cref{F:T(562)} depicts the graph \(G = T(5, 6, 2)\);
note that the edges between the top and bottom rows are not shown.

\begin{figure}
\centering
	\begin{tikzpicture}[font=\scriptsize]
	\draw (1,1) grid (6,6);
	\foreach \y in {2,...,6}
		\foreach \x in {1,...,5}{
			\draw (\x,\y) -- (\x+1,\y-1);
	}
	\foreach \x in {1,...,6}
		\foreach \y in {1,...,6}{
			\draw[fill=black] (\x,\y) circle(2pt);
		}
	\foreach \y in {1,...,6}{
		\node[anchor=east] at (1,\y) {(1,\y)};
		\pgfmathparse{Mod(\y-5,6)}
		\node[anchor=west] at (6,\pgfmathresult+1) {(1,\y)};
	}
	\foreach \x in {2,...,5}{
		\node[anchor=north] at (\x,1) {(\x,1)};
		\node[anchor=south] at (\x,6) {(\x,6)};
	}
	\end{tikzpicture}
	\caption{\(G = T(5,6,2)\)}\label{F:T(562)}
\end{figure}
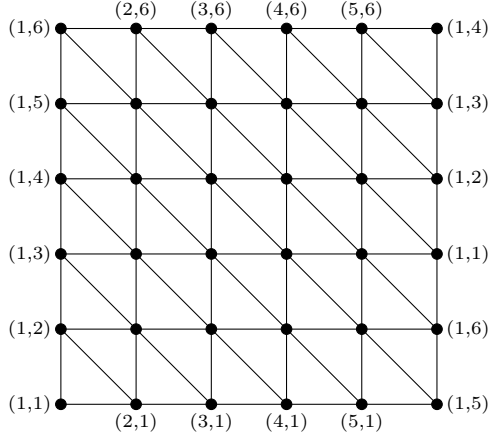

It is clear that each \(T(r, s, t)\) is a \(6\)-regular triangulation
of the torus. Altshuler's theorem says that these
are all the \(6\)-regular triangulations on the torus up to isomorphism (similar
constructions also appear in \cites{Neg83,Tho91}).

\begin{theorem}[Altshuler~\cite{Alt73}, 1973]\label{T:Altshuler}
	Every \(6\)-regular triangulation on the torus is isomorphic
	to \(T(r, s, t)\) for some integers \(r \geq 1\), \(s \geq 1\), and \(0 \leq t < s\).
\end{theorem}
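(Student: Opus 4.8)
The plan is to pass to the universal cover and exploit the fact that $6$-regularity of a triangulation is a local \emph{flatness} condition that completely rigidifies the global structure. Let $G$ be a $6$-regular triangulation on the torus. Its geometric realization is a closed orientable surface of genus $1$, whose universal cover is the plane $\mathbb{R}^2$; pulling back the cell structure along the covering map produces a triangulation $\widetilde{G}$ of $\mathbb{R}^2$, and $6$-regularity is inherited since the covering is a cellular local homeomorphism. Thus the whole problem reduces to the planar case, and the first task is to prove the key rigidity lemma: \emph{every $6$-regular triangulation of the plane is combinatorially isomorphic to the triangular lattice} $\mathcal{T}$, the triangulation with vertex set $\mathbb{Z}^2$ in which $(a,b)$ is adjacent to $(a\pm 1, b)$, $(a, b\pm 1)$, $(a+1, b-1)$ and $(a-1, b+1)$.

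I expect the key lemma to be the main obstacle, and I would prove it by a developing-map argument. In any $6$-regular triangulation the link of each vertex is a closed hexagonal walk, exactly as in $\mathcal{T}$, where the six incident triangles close up with zero angle defect. Fixing a base vertex $v_0$, I would send it to the origin of $\mathcal{T}$ and its six neighbors to the six lattice directions in cyclic order, then extend a map $\phi \colon \widetilde{G} \to \mathcal{T}$ by induction on combinatorial distance from $v_0$: once a vertex has been placed, the requirement that its six triangles match the unique hexagonal star of $\mathcal{T}$ forces the positions of all its neighbors. The hard part is verifying that $\phi$ is well defined, i.e.\ that carrying the coordinatization around any closed combinatorial loop returns consistent values; this is where the local uniqueness of the hexagonal star combines with the simple connectivity of $\mathbb{R}^2$ to kill all monodromy. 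Once $\phi$ is shown to be a well-defined covering map between triangulations of the simply connected plane, it is an isomorphism.

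With $\widetilde{G} \cong \mathcal{T}$ in hand, I would analyze the deck group. The covering $\widetilde{G} \to G$ corresponds to a group $\Gamma$ acting freely and properly discontinuously on $\mathbb{R}^2$ by combinatorial automorphisms of $\mathcal{T}$, with quotient $G$; since the torus is orientable these are orientation-preserving automorphisms. The orientation-preserving automorphism group of $\mathcal{T}$ is the wallpaper group $p6$, whose nontrivial elements are either translations or rotations, and every rotation fixes a point of $\mathbb{R}^2$. Freeness of the action therefore forces $\Gamma$ to consist entirely of translations, so that, identifying translations of $\mathcal{T}$ with $\mathbb{Z}^2$, the group $\Gamma$ becomes a sublattice $\Lambda \subseteq \mathbb{Z}^2$; it has finite index because the quotient $G$ is compact.

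It remains to put $\Lambda$ into a normal form matching the parameters. Every finite-index sublattice of $\mathbb{Z}^2$ admits a basis in Hermite normal form, which here can be taken to be $\langle s\,\mathbf{e}_j,\; r\,\mathbf{e}_i - t\,\mathbf{e}_j\rangle$ with $r, s \geq 1$, where $\mathbf{e}_i, \mathbf{e}_j$ are the horizontal and vertical unit translations of $\mathcal{T}$; reducing the second generator modulo $s\,\mathbf{e}_j$ arranges $0 \leq t < s$, which is precisely the range in the statement. Reading off the quotient $\mathcal{T}/\Lambda$, the generator $s\,\mathbf{e}_j$ closes each column into an $s$-cycle while $r\,\mathbf{e}_i - t\,\mathbf{e}_j$ identifies column $r+1$ with column $1$ after a vertical shift of $t$. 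A direct comparison of the resulting adjacencies with the defining edge list of $T(r,s,t)$ then shows $G \cong \mathcal{T}/\Lambda \cong T(r,s,t)$, completing the proof. I would keep this last comparison at the level of matching generators and neighborhoods rather than grinding through every edge case.
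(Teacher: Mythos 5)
Your proposal is sound in outline, but it takes a genuinely different route from the one behind the paper's statement: the paper imports \cref{T:Altshuler} from Altshuler, whose original proof is purely combinatorial---he constructs the normal circuits (the vertical, horizontal, and diagonal simple cycles the paper describes right after the theorem), shows that one family of them partitions the vertex set into $r$ disjoint circuits of length $s$, and reads off the grid-with-shift structure $T(r,s,t)$ directly from how consecutive circuits are stitched together. You instead pass to the universal cover, prove a rigidity lemma (every $6$-regular triangulation of the plane is the triangular lattice $\mathcal{T}$) by a developing-map argument, observe that freeness of the deck action rules out the rotations in $p6$ so that the deck group is a finite-index translation lattice $\Lambda \subseteq \mathbb{Z}^2$, and recover $T(r,s,t)$ from the Hermite normal form of $\Lambda$. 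This is essentially the topological viewpoint of Thomassen and Negami (both cited in the paper as sources of ``similar constructions''), and it buys conceptual clarity: the three-parameter normal form and the ambiguity in $t$ (the isomorphisms $T(r,s,t) \cong T(r,s,t')$ with $t' \equiv -r-t \pmod{s}$ that the paper uses repeatedly) become transparent statements about choices of lattice basis, whereas Altshuler's approach is more elementary, needs no covering-space machinery, and produces the normal circuits as a by-product---objects this paper actively needs later. Two points in your sketch deserve care when written out in full. First, the theorem must cover non-simple triangulations ($T(1,s,t)$ has multiple edges, and small cases have loops), so your rigidity lemma and the lifting step should be phrased for triangulations as maps (rotation systems), where a vertex link is a closed walk of length $6$ rather than a $6$-cycle of distinct neighbors; the development argument still works, but the local data being propagated is the rotation at each vertex, not a simple neighborhood. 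Second, the monodromy-killing step you flag as the hard part is indeed the crux: the clean way to finish is exactly as you indicate, showing $\phi$ is a local isomorphism on closed stars and hence a covering of the simply connected realization of $\mathcal{T}$, which forces it to be an isomorphism. Neither point is a gap in the plan, only in the level of detail.
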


As shown by Altshuler in~\cites{Alt72,Alt73},
through every vertex \(v\) of \(T(r, s, t)\) there are three so-called
\defining{normal circuits}, which are the simple cycles obtained
by traversing through \(v\) along each of the three directions---vertical, horizontal,
and diagonal---in the natural fashion. These normal circuits
have lengths \(s\), \(\frac{n}{\gcd(s, t)}\), and \(\frac{n}{\gcd(s, r + t)}\), respectively,
where \(n = rs\) is the order of \(T(r, s, t)\).

By picking a different normal circuit to be represented
as the vertical cycle, one can see that there exist
\(0 \leq t_{1} < \frac{n}{\gcd(s, t)}\) and \(0 \leq t_{2} < \frac{n}{\gcd(s, r + t)}\)
such that \(T(r, s, t)\) is isomorphic to \(T\parens[\big]{\gcd(s, t), \frac{n}{\gcd(s, t)}, t_{1}}\)
as well as to \(T\parens[\big]{\gcd(s, r + t), \frac{n}{\gcd(s, r + t)}, t_{2}}\).
By swapping the horizontal and diagonal normal circuits,
one can see that \(T(r, s, t)\) is isomorphic
to \(T(r, s, t')\) for \(0 \leq t' < s\) such that \(t' \equiv - r - t \pmod{s}\).

Now, Dirac's map color theorem~\cite{Dir52c} states that the only
connected graph \(G\) with \(\chr(G) = H(g)\) that is embeddable on a surface
of genus \(g > 0\) is \(K_{H(g)}\). So, \(K_{7}\) is the only \(7\)-chromatic
\(6\)-regular triangulation on the torus.
Albertson and Hutchinson~\cite{AlbHut80} showed that there is a unique \(6\)-chromatic
\(6\)-regular simple triangulation on the torus, which has \(11\) vertices.
Then, Collins and Hutchinson~\cite{ColHut99} gave a characterization of the \(4\)-colorable
triangulations \(T(r, s, t)\) with \(r, s \geq 3\) as follows:

\begin{theorem}[Collins--Hutchinson~\cite{ColHut99}*{Theorem 1.2}]\label{T:Col-Hut}
Let \(G = T(r, s, t)\). If \(r, s \geq 3\), then
\(G\) can be \(4\)-colored, with a finite number of exceptions.
\end{theorem}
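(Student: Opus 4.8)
The plan is to realize \(T(r,s,t)\) as a quotient of the infinite triangular lattice \(\mathcal T\) and to obtain a \(3\)- or \(4\)-coloring by pulling one back along a graph homomorphism onto a small circulant. Take \(\mathcal T\) to be \(\mathbb Z^{2}\) with \((i,j)\) adjacent to \((i\pm1,j)\), \((i,j\pm1)\), \((i+1,j-1)\), \((i-1,j+1)\); then \(T(r,s,t)=\mathcal T/\Lambda\) for the sublattice \(\Lambda=\langle(0,s),(r,t)\rangle\), since the wrap-around edges in Altshuler's description are exactly the identifications \((i,j)\sim(i,j+s)\) and \((i,j)\sim(i+r,j+t)\). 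I would first record the three primitive linear forms \(w_{1}=i-j\), \(w_{2}=i+2j\), \(w_{3}=2i+j\): each sends every edge-vector of \(\mathcal T\) to \(\pm1\) or \(\pm2\). Hence, writing \(g_{k}\) for the index \([\mathbb Z:w_{k}(\Lambda)]\)—that is, \(g_{1}=\gcd(s,r-t)\), \(g_{2}=\gcd(2s,r+2t)\), \(g_{3}=\gcd(s,2r+t)\)—the map \((i,j)\mapsto w_{k}(i,j)\bmod g_{k}\) is, whenever \(g_{k}\ge3\), a graph homomorphism from \(T(r,s,t)\) onto the circulant \(C_{g_{k}}(1,2)\), the square of the \(g_{k}\)-cycle.

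The engine is the elementary fact that \(\chr\parens{C_{g}(1,2)}=3\) if \(3\mid g\), equals \(5\) if \(g=5\), and equals \(4\) for every other \(g\ge3\). Pulling back an optimal coloring of \(C_{g_{k}}(1,2)\) therefore gives \(\chr(T)\le3\) as soon as some \(g_{k}\) is divisible by \(3\), and \(\chr(T)\le4\) as soon as some \(g_{k}\ge4\) with \(g_{k}\neq5\). A short congruence check shows \(3\mid g_{1}\Leftrightarrow 3\mid g_{2}\Leftrightarrow 3\mid g_{3}\Leftrightarrow(3\mid s\text{ and }3\mid r-t)\), which, by the rigidity of the unique \(3\)-coloring of \(\mathcal T\), is exactly the criterion for \(T(r,s,t)\) to be \(3\)-colorable; so in the \(3\)-divisible case we are done. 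In the complementary case every \(g_{k}\) is coprime to \(3\), and we obtain a \(4\)-coloring unless all three of \(g_{1},g_{2},g_{3}\) lie in the tiny set \(\{1,2,5\}\).

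It remains to treat the residual family with \(g_{1},g_{2},g_{3}\in\{1,2,5\}\), where no single form helps and the coloring must genuinely use both directions. Here I would build a periodic \(4\)-coloring by a transfer argument along a short normal circuit. Using the reparametrizations recorded after \Cref{T:Altshuler}, I would present \(T\) so that its shortest normal circuit becomes the row-cycle \(C_{s}\); a proper \(4\)-coloring of \(T\) is then a closed orbit, twisted by the shift \(t\), of the transfer operator \(M\) on proper \(4\)-colorings of \(C_{s}\). Since \(M\) is primitive for each fixed \(s\ge3\), the twisted trace \(\operatorname{tr}\parens{P_{t}M^{r}}\) is positive once \(r\) exceeds a bound depending only on \(s\), which yields a \(4\)-coloring for all sufficiently large \(r\). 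The finitely many triples left uncovered—those with both coordinates below the relevant thresholds—are then settled by direct inspection, and the genuinely non-\(4\)-colorable ones among them are precisely the advertised exceptions.

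The main obstacle is the uniformity of this last step: the primitivity bound on \(M\) a priori grows with the circumference, so the transfer construction is only clearly uniform when one normal circuit stays short, and the delicate point is to verify that every triple with all \(g_{k}\in\{1,2,5\}\) either has a bounded shortest circuit or is already small. Pinning this down—and with it the exact finite list of exceptions—is, I expect, exactly where the earlier classification needs repair; most of the labor will be the bookkeeping that separates the truly exceptional small graphs from the large ones that the transfer construction already handles.
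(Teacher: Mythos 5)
Your proposal attempts to prove a statement that is in fact false, and the paper's treatment of \cref{T:Col-Hut} is a \emph{refutation}, not a proof: the whole point of \cref{S:prelim} is that the exceptional set is infinite. Concretely, \(T(1,3s,2)\) is isomorphic to \(T(3,s,s-1)\cong T(3,s,s-2)\), and since every four successive vertices of \(T(1,3s,2)\) induce a \(K_{4}\), a proper \(4\)-coloring must repeat the pattern \(1234\) around the cycle, which forces \(4\mid 3s\), i.e.\ \(4\mid s\). Hence for every \(s\geq 3\) with \(s\not\equiv 0\pmod 4\) the graph \(T(3,s,s-1)\), which has \(r,s\geq 3\), is \(5\)-chromatic --- infinitely many exceptions. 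The corrected statement is \cref{T:main}, which the paper proves by an entirely different positive device (repeating a periodic coloring of an unshifted triangulation \(T(r,\gcd(s,t),0)\) block-wise), and whose hypothesis \((\frac{n}{a},\frac{n}{b})\neq(1,1),(1,2)\) exists precisely to exclude the infinite family above.

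To locate the failure inside your argument: the homomorphism machinery is sound (the three forms do send edge-vectors to \(\pm1,\pm2\), the indices \(g_{k}\) are as you computed, and your \(3\)-colorability criterion agrees with \cref{T:final}(f)), and the counterexamples do land in your residual family --- for \(T(3,s,s-1)\) one gets \(g_{1}=\gcd(s,4-s)=\gcd(s,4)\in\{1,2\}\) when \(4\nmid s\), \(g_{2}=\gcd(2s,2s+1)=1\), and \(g_{3}=\gcd(s,s+5)=\gcd(s,5)\in\{1,5\}\). The step that collapses is exactly the one you flagged as delicate: the dichotomy that every residual triple ``either has a bounded shortest circuit or is already small'' is false in the direction you need. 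Presenting \(T(3,s,s-1)\) with its shortest normal circuit as the row-cycle gives circumference \(s\to\infty\) with only \(r=3\) columns, the opposite regime from the one where primitivity of your transfer operator \(M\) forces a positive twisted trace (which needs \(r\) large relative to \(s\)); and no amount of bookkeeping can close this gap, because these graphs genuinely admit no \(4\)-coloring. So your residual family contains an infinite \(5\)-chromatic subfamily, and ``finitely many exceptions'' is unattainable; any correct version must exclude that subfamily by hypothesis, as \cref{T:main} does. (Incidentally, this is essentially the same oversight as in Collins--Hutchinson's original argument, which implicitly assumed \(r\geq s\) when concluding that their parameter \(y\geq 3\) fails only finitely often.)
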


Note that a \defining{\(6\)-regular right-diagonal (unshifted) \(m \times n\) grid}
in their notation is what we call \(T(n, m, 0)\), and that a \defining{\(6\)-regular right-diagonal
\((m \times n; k)\) grid} for \(k > 1\) in their notation is what we call \(T(n, m, m - k + 1)\).
In particular, the \((m \times n; 1)\) grid is the same as the unshifted \(m \times n\) grid,
which is \(T(n, m, 0)\) in our notation.

In this paper, we point out a gap in the proof of \cref{T:Col-Hut}
that makes the statement incorrect, and we
provide a patch to the statement and proof.
In \cref{S:prelim}, we locate the error in the proof of \cref{T:Col-Hut},
and provide explicit counterexamples to its statement.
In \cref{S:main}, we prove the following modification of
the above theorem:
\begin{theorem}\label{T:main}
Let \(G = T(r, s, t)\) be a simple \(6\)-regular triangulation having normal circuits of lengths
\(a \geq b \geq c\). Suppose that \((\frac{n}{a}, \frac{n}{b}) \neq (1, 1), (1, 2)\),
where \(n = rs\) is the order of \(G\). Then \(G\) can be \(4\)-colored.
\end{theorem}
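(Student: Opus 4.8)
The plan is to view \(G\) as the quotient \(\mathbb{Z}^{2}/\Lambda\) of the triangular lattice—the graph on \(\mathbb{Z}^{2}\) in which \((i,j)\) is joined to \((i\pm1,j)\), \((i,j\pm1)\), \((i\pm1,j\mp1)\)—by the sublattice \(\Lambda = \langle (0,s),\,(r,t)\rangle\), and to exhibit a proper \(4\)-coloring of \(\mathbb{Z}^{2}\) that is constant on \(\Lambda\)-cosets, so that it descends to \(G\). The three families of normal circuits correspond to the three edge-directions, and a direct computation shows their co-lengths \(\tfrac{n}{a},\tfrac{n}{b},\tfrac{n}{c}\) are \(r\), \(\gcd(s,t)\), and \(\gcd(s,r+t)\) in some order. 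Using the isomorphisms recorded in the introduction (which let me reassign which normal circuit is vertical), I would first normalize so that the longest circuit is vertical; then \(\tfrac{n}{a}=r\) and \(\tfrac{n}{b}\) is the next co-length. The hypothesis now reads: either \(r\ge 2\) (equivalently, no normal circuit is Hamiltonian), or \(r=1\) with \(\tfrac{n}{b}\ge 3\), the latter being a single-column graph \(T(1,s,t)\). Both possibilities are to be handled by the same construction.

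For the coloring itself I would begin with two \enquote{linear} templates. The mod-\(2\) coloring \((i,j)\mapsto(i\bmod 2,\,j\bmod 2)\in(\mathbb{Z}/2)^{2}\) is a proper \(4\)-coloring of the lattice, and it descends whenever \(r,s,t\) are all even. The mod-\(3\) coloring \(c_{0}(i,j)=(i-j)\bmod 3\) is a proper \(3\)-coloring, and it descends whenever \(3\mid s\) and \(r\equiv t\pmod 3\), giving \(\chr(G)\le 3\) outright. These dispose of the most symmetric cases and, more importantly, supply the backbone for the rest: away from a single seam I would keep \(c_{0}\), and use the fourth color only to reconcile the failure of \(c_{0}\) to close up around the two generating cycles of \(G\), a failure measured by the residues \(s\bmod 3\) and \((r-t)\bmod 3\).

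Concretely, I would choose one normal circuit \(S\) to serve as a seam and recolor \(S\) (together with, where forced, a bounded collar around it) by a \(4\)-periodic pattern that (i) is internally proper, (ii) matches \(c_{0}\) along the edges leaving the collar, and (iii) shifts the effective color offset by exactly the amount needed to kill the closing-up discrepancy. Since the pattern on \(S\) must be proper and must mesh with \(c_{0}\) on both sides, the length of \(S\) has to exceed a small threshold and to satisfy a parity/divisibility compatibility; this is exactly where the hypothesis is used. The excluded pairs \((\tfrac{n}{a},\tfrac{n}{b})=(1,1),(1,2)\) are precisely the configurations whose two longest circuits are (almost) Hamiltonian—equivalently the single-column graphs \(T(1,s,t)\) with very short diagonal circuits—so that no seam long enough to carry the patch exists; these are the graphs governed by Yeh–Zhu and include the genuine non-\(4\)-colorable examples, notably \(K_{7}\).

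The step I expect to be the main obstacle is the verification of (ii)–(iii): checking that the patched coloring is proper across the seam in every residue class. I anticipate a finite case analysis on \(s\bmod 3\), \(r\bmod 3\), and the relevant parities and \(\gcd\)s, with a slightly different explicit seam pattern in each branch; the difficulty is bookkeeping rather than a single hard idea. A secondary point is to make the normalization uniform, so that under the hypothesis a seam of adequate length is always available—here the freedom to permute the roles of the three normal circuits, together with the bound \(\tfrac{n}{b}\ge 3\) in the Hamiltonian case, should provide the needed room. Finally I would confirm that the construction covers every graph meeting the hypothesis and that the uncovered cases coincide exactly with the known exceptions, thereby repairing the statement of \cref{T:Col-Hut}.
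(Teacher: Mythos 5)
Your reduction of the statement is sound as far as it goes: the co-lengths are indeed \(r\), \(\gcd(s,t)\), \(\gcd(s, r+t)\) in some order, your reading of the hypothesis after normalizing the longest circuit to be vertical is correct, and your mod-\(3\) template \(c_{0}(i,j) = (i - j) \bmod 3\) descends exactly when \(s \equiv 0 \equiv r - t \pmod{3}\), matching the \(3\)-chromatic criterion recorded in \cref{T:final}(f). But the proposal is a program, not a proof: everything rests on the seam step (ii)--(iii), which you explicitly leave unverified, and that is precisely where it would run into trouble. First, the holonomy defect of \(c_{0}\) is a pair of residues in \((\mathbb{Z}/3)^{2}\), one per generator, while a seam along a single normal circuit only adjusts the holonomy of cycles crossing it; for certain defects you need either a seam in the diagonal class or two transverse seams, whose crossing point must itself be recolored consistently---none of this is worked out. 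Second, a seam pattern that is internally proper, uses the fourth color, and meshes with the \(3\)-periodic \(c_{0}\) on both flanks imposes residue/parity conditions on the \emph{length} of the seam circuit; the hypothesis \((\frac{n}{a}, \frac{n}{b}) \neq (1,1), (1,2)\) bounds co-lengths from below but gives no control whatsoever on \(a, b, c\) modulo small numbers, so nothing guarantees the compatibility you need. Third, the hypothesis allows \((\frac{n}{a}, \frac{n}{b}) = (2,2)\), where only two parallel circuits exist in the relevant direction: the two sides of your ``bounded collar'' are then the \emph{same} circuit, and the matching conditions on the two flanks collide (the paper must treat this case separately, by \(2\)-coloring the two columns when \(a\) is even). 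Finally, your heuristic identification of the excluded cases is backwards: when \((\frac{n}{a}, \frac{n}{b}) = (1,1)\) or \((1,2)\) the two longest circuits are Hamiltonian or nearly so---the seam is as \emph{long} as possible; what is scarce is the supply of parallel circuits, so it is the collar, not the seam, that fails to fit.

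The paper's proof avoids local surgery entirely, and the contrast is instructive. It represents \(G\) so that the vertical circuit is longest (hence \(r = \frac{n}{a} \geq 2\) columns, or \(\geq 3\) after a further re-representation), takes Collins--Hutchinson's proper \(4\)-coloring of the \emph{unshifted} triangulation \(T(r, \gcd(s,t), 0)\), and stacks it \(s/\gcd(s,t)\) times (\cref{T:part1}). Since the resulting coloring of each column is periodic with period \(\gcd(s,t)\), which divides \(t\), the wrap-around shift by \(t\) acts trivially on it: the seam condition holds automatically, with no fourth-color patching and no residue bookkeeping. The remaining cases (\(r = 2\) with \(s\) odd, and the re-representations in \cref{T:part2}) use exactly the normalization freedom you invoke, but to make the block construction apply rather than to locate a seam. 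In effect, all the finite case analysis you defer is already encapsulated in Collins--Hutchinson's Theorem~3.3 on unshifted grids; to complete your route you would essentially have to reprove that theorem by seam surgery, and as written the proposal does not establish \cref{T:main}.
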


Combined with the earlier results~\cites{Hea90,Dir52c,AlbHut80,ColHut99}
as well as the classification of the \(4\)-colorable
triangulations \(T(1, s, t)\) by Yeh--Zhu~\cite{YehZhu03}, we
complete the characterization of the colorability of all the
\(6\)-regular triangulations on the torus in \cref{T:final} in \cref{S:conclusion}.

\section{Examining the statement of \texorpdfstring{\cref{T:Col-Hut}}{Theorem~1.2} and its proof}\label{S:prelim}

\subsection{Constructing counterexamples to \texorpdfstring{\cref{T:Col-Hut}}{Theorem~1.2}}\label{SS:counter}

Collins and Hutchinson identify that \(T(3, 3, 1)\), \(T(3, 3, 2)\),
\(T(3, 5, 3)\), and \(T(3, 5, 4)\) are not \(4\)-colorable, but state
that there are no others of the form \(T(3, s, t)\) for \(s \geq 3\)~\cite{ColHut99}*{Theorem~3.7}.
However, as we show below, the graphs of the form \(T(3, s, s - 2)\) and \(T(3, s, s - 1)\)
are not \(4\)-colorable for all \(s \not\equiv 0 \pmod{4}\). Note
that the four graphs mentioned in the beginning are obtained by plugging
in \(s = 3, 5\) in these expressions.

Now, consider the triangulations \(T(1, s, 2)\) for \(s \geq 7\). These are
simple graphs, and, as noted in \cite{ColHut99}*{Section~3}, every four successive vertices of \(T(1, s, 2)\)
induce a \(K_{4}\). Thus, \(T(1, s, 2)\) is \(4\)-colorable for \(s \geq 7\) if and only if
\(s \equiv 0 \pmod{4}\).

Therefore, we consider the graphs \(T(1, 3s, 2)\) for \(s \geq 3\) and \(s \not\equiv 0 \pmod{4}\).
These are all \(5\)-chromatic graphs by \cites{Hea90,Dir52c,AlbHut80}.
The normal circuits in \(T(1, 3s, 2)\) have lengths \(3s\), \(3s\), and \(s\), respectively,
so \(T(1, 3s, 2)\) is isomorphic to \(T(3, s, t)\) for some \(0 \leq t \leq s - 1\).
Since there are infinitely many \(s \geq 3\) such that \(s \not\equiv 0 \pmod{4}\),
there are infinitely many graphs of the form \(T(3, s, t)\)
that are not \(4\)-colorable, contradicting the statement of \cref{T:Col-Hut}.

In fact, one can check by a careful computation that \(t = s - 2, s - 1\) in this case.
For simplicity, we label the vertex \((1, j)\) with the integer \(j\)
(recall that \(j\) is taken modulo \(3s\)).
Map the vertical, horizontal, and diagonal normal circuits of \(T(1, 3s, 2)\)
to the horizontal, diagonal, and vertical normal circuits of \(T(3, s, 2)\),
respectively.
Then, the vertical normal circuit of \(T(3, s, t)\) has labels \(3s, 3s-3, 3s-6, 3s-9, \dotsc\)
from top to bottom when drawn as in~\cref{F:T(562)}. The horizontal normal circuit
through the vertex with label \(3s\) has the first four labels in the right-to-left
direction as \(3s\), \(3s - 1\), \(3s - 2\), and \(3s - 3\).
Thus, the shift is \(t = s - 1\). Since \(T(r, s, t)\) is isomorphic to \(T(r, s, t')\)
for \(0 \leq t' < s\) such that \(t' \equiv -r - t \pmod{s}\), the graph
\(T(3, s, s - 1)\) is isomorphic to \(T(3, s, s - 2)\). Thus,
the graphs \(T(3, s, s - 1)\) and \(T(3, s, s - 2)\) are not \(4\)-colorable
for all \(s \geq 3\) such that \(s \not\equiv 0 \pmod{4}\).

\subsection{Gap in the proof of \texorpdfstring{\cref{T:Col-Hut}}{Theorem~1.2}}\label{SS:gap}

The proof of \cref{T:Col-Hut} in \cite{ColHut99} is broken up into a sequence of results,
first for the unshifted triangulations \(T(r, s, 0)\) \cite{ColHut99}*{Lemma~3.2, Theorem~3.3, Lemma~3.4},
and then for the shifted triangulations \(T(r, s, t)\) with \(t \neq 0\) \cite{ColHut99}*{Theorems~3.6 and 3.7}.
We identify the following theorem as the source of the contradiction:

\begin{theorem}[Collins--Hutchinson~\cite{ColHut99}*{Theorem~3.6}]\label{T:problem}
	Let \(G = T(r, s, t)\) for some \(0 < t \leq s - 1\). Then if
	\(3 \leq s, r\), \(G\) can be \(4\)-colored except possibly in the case when
	\(r = 5\), or when \(t = s - 1\) and \(r = s\) or \(s + 1\), or when \(t = s - 2\)
	and \(r = s\).
\end{theorem}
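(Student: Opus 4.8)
The plan is to realize \(T(r,s,t)\) as a quotient of the infinite triangular lattice and to color it by descending a periodic coloring, then repairing the ``seam'' created by the shift. Concretely, the interior adjacency rule shows that \(T(r,s,t)\) is the quotient of the triangular lattice on \(\mathbb{Z}^{2}\) (the six neighbours of \((i,j)\) being \((i,j\pm1)\), \((i\pm1,j)\), \((i+1,j-1)\), and \((i-1,j+1)\)) by the translation lattice \(\Lambda \defn \langle (0,s),\,(r,t)\rangle\): here \((0,s)\) is the vertical wrap and \((r,t)\) encodes the \(t\)-shifted horizontal wrap, since the boundary rule forces \((i+r,j)\) to be identified with \((i,j-t)\). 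Before anything else I would use the normal-circuit isomorphisms recorded after \cref{T:Altshuler}---the fact that \(T(r,s,t)\cong T(r,s,t')\) for \(t'\equiv -r-t \pmod s\), and that any of the three normal circuits may be promoted to the vertical cycle---to normalize \(t\) and to select a convenient representative, thereby reducing to a short list of genuinely distinct configuration types.

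The base coloring is the proper \(3\)-coloring \(c(i,j)\defn (i+2j)\bmod 3\) of the triangular lattice; one checks directly that all six neighbours of \((i,j)\) receive a colour distinct from \(c(i,j)\). This descends to a proper \(3\)-coloring of \(T(r,s,t)\) exactly when \(c\) is \(\Lambda\)-invariant, i.e.\ when \(2s\equiv 0\) and \(r+2t\equiv 0 \pmod 3\), in which case \(G\) is \(3\)-colorable and we are done. Otherwise at least one generator of \(\Lambda\) fails to preserve \(c\), so the identification of opposite sides of the fundamental domain produces conflicting edges concentrated along the corresponding boundary seam. The heart of the argument is to spend the fourth colour along this seam: I would recolor a carefully chosen arithmetic subset of the seam cycle with colour \(4\), so that every conflicting edge gets an endpoint recolored while no two recolored vertices are adjacent and no fresh conflict is created with the \(3\)-colored bulk. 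Whether such a subset exists is governed by the length of the seam circuit---one of \(s\), \(\tfrac{n}{\gcd(s,t)}\), \(\tfrac{n}{\gcd(s,r+t)}\)---and by the residue of the mismatch, that is, by divisibility of \(s\), \(r\), or \(r+t\) by \(2\), \(3\), or \(4\).

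The main obstacle, and the step I expect to be delicate, is showing that this seam repair is always available and that the displayed exceptional list is complete. Each repair imposes a congruence on the relevant circuit length, and when that congruence fails one must either recolor along a second circuit or else verify by hand that no \(4\)-coloring exists; the listed exceptions \(r=5\), \(t=s-1\) with \(r\in\{s,s+1\}\), and \(t=s-2\) with \(r=s\) are precisely the small configurations where the clean repair breaks down. I would be especially wary of the regime of small \(r\)---in particular \(r=3\)---interacting with a near-maximal shift \(t\in\{s-1,s-2\}\): there the seam is long but the number of parallel circuits is only \(r\), so the residue bookkeeping is tightest, and this is exactly the corner where a finite case-analysis is at risk of omitting genuinely non-\(4\)-colorable graphs---consistent with the gap this paper identifies through the family \(T(3,s,s-1)\) and \(T(3,s,s-2)\).
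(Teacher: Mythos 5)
Your lattice-quotient setup is fine as far as it goes: \(T(r,s,t)\) is indeed the quotient of the triangular lattice by \(\Lambda=\langle(0,s),(r,t)\rangle\), the coloring \(c(i,j)=(i+2j)\bmod 3\) is proper, and it descends exactly when \(s\equiv 0\) and \(r\equiv t\pmod 3\), matching the \(3\)-chromaticity criterion in \cref{T:final}(f). The fatal problem is the step you yourself flag as delicate: the claim that the fourth-color ``seam repair'' is always available outside the displayed exceptional list can never be established, because the statement being proved is \emph{false}. As shown in \cref{SS:counter}, the graphs \(T(1,3s,2)\) with \(s\geq 3\), \(s\not\equiv 0\pmod 4\), are not \(4\)-colorable: every four consecutive vertices induce a \(K_{4}\), which forces any \(4\)-coloring to be periodic with period \(4\) around a cycle of length \(3s\), impossible unless \(4\mid 3s\). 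Computing normal circuits shows these graphs are isomorphic to \(T(3,s,s-1)\cong T(3,s,s-2)\), which for \(s\geq 6\) satisfy every hypothesis of the quoted theorem (\(r=3\geq 3\), \(r\neq 5\), and \(r\neq s,s+1\)), so they fall squarely in its ``can be \(4\)-colored'' conclusion. Hence no choice of arithmetic subset of the seam cycle can succeed for this infinite family; your closing paragraph's wariness about \(r=3\) with \(t\in\{s-1,s-2\}\) identifies not a tight corner of the bookkeeping but the precise locus where the theorem itself breaks down. A correct write-up must therefore include an impossibility argument and enlarge the exceptional list, i.e.\ prove \cref{T:main} rather than \cref{T:problem}.

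For comparison, the paper does not prove this statement at all; it diagnoses the gap in Collins--Hutchinson's argument and patches the statement. Their proof is column-based, not seam-based: color columns \(C_{1},\dotsc,C_{y}\) by a proper \(4\)-coloring of the unshifted \(T(y,s,0)\) (needing \(y\geq 3\)), then repeat the coloring on successive columns with unit upward shifts (needing \(r,s\neq 5\)) so that after \(t\) shifted columns the wrap-around identification \(C_{1}\mapsto C_{y+t+1}\) is respected, which forces \(r=y+t\). The assertion that \(y=r-t\geq 3\) fails only in the listed cases is valid only under the unstated hypothesis \(r\geq s\); with that hypothesis added, the theorem stands, and the regime \(r<s\) is handled in the paper by a genuinely different device (\cref{T:part1,T:part2}): normalize so that \(s\) is the longest normal circuit, then tile a proper coloring of \(T(r,\gcd(s,t),0)\) periodically down the rows, the \(\gcd(s,t)\)-periodicity of the columns rendering the shift by \(t\) invisible. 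Your seam-repair idea, even restricted to the cases where the statement is true, would also not naturally reproduce the \(r=5\) exception, which in the original proof stems from the shift-compatibility lemma rather than from any congruence on circuit lengths.
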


The proof of this theorem proceeds as follows. Let \(C_{i}\) denote
the \(i\)th column of \(T(r, s, t)\), for \(i = 1,\dotsc,r\).
First, a proper \(4\)-coloring of \(T(y, s, 0)\) is used to color \(C_{1},\dotsc,C_{y}\),
where \(y \geq 3\) is to be determined.
(Note that \(y \geq 3\) is needed to ensure that a proper \(4\)-coloring
of \(T(y, s, 0)\) can be found using \cite{ColHut99}*{Theorem~3.3}.)
Then, the coloring on \(C_{1}\) is repeated on \(C_{y + 1}\). Then, the coloring on \(C_{y + 1}\) is
repeated on \(C_{y + 2}\) after an upward shift by one vertex.
As shown in \cite{ColHut99}*{Lemma~3.1}, this coloring on \(C_{y + 2}\)
is compatible with the coloring on \(C_{y + 1}\) as long as \(r, s \neq 5\).
Repeat this process of repeating the coloring on successive columns with an upward shift
to color the \(t\) columns \(C_{y + 2}, \dotsc, C_{y + t + 1}\).
Now, note that the coloring on \(C_{y + t + 1}\) is identical to the coloring
on \(C_{1}\), except that it is shifted upwards by \(t\) vertices.
Thus, this gives a valid coloring of \(T(r, s, t)\) provided that \(r = y + t\).

At this point, Collins and Hutchinson state that the inequality \(y \geq 3\)
fails only when \(t = s - 1\) and \(r = s\) or \(s + 1\), or when \(t = s - 2\)
and \(r = s\), so this concludes their proof. However, this conclusion can only be drawn under the additional
hypothesis that \(r \geq s\). Thus, their proof holds only under the additional
hypothesis that \(r \geq s\).

Hence, the statements
of \cref{T:problem} and \cite{ColHut99}*{Theorem~3.7} need to be modified
by adding the hypothesis that \(r \geq s\). However,
the statement of \cite{ColHut99}*{Theorem~3.8} is now weakened, since the colorability
of the triangulations \(T(2, s, t)\) (for odd \(s\)) that are not isomorphic to \(T(1, 2s, t')\)
for any \(0 \leq t' < 2s\) is no longer completely settled by their previous results.

Furthermore, the above method of proof does not seem to easily lend itself to the cases
when \(r < s\). The above argument does extend to the graphs \(T(r, s, t)\),
\(3 \leq r, s\), \(r \neq 5\), \(s \neq 5\), with \(r \geq t + 3\) or \(r > s - \ceil{t/2}\),
the latter by extending the coloring on \(C_{y}\) by
using downward shifts by two vertices instead of upward shifts by one vertex,
but it is not clear, for instance, how one can \(4\)-color \(T(10,990,100)\)
by an argument along the above lines.

In the next section, we provide
a different argument to color the shifted triangulations.

\section{Main result}\label{S:main}

We shall use Collins and Hutchinson's coloring of the unshifted triangulations~\cite{ColHut99}*{Theorem~3.3}
to color the shifted triangulations.

\begin{theorem}\label{T:part1}
	Let \(G = T(r, s, t)\) be a simple triangulation with
	\(r \neq 1\) and \(s\) as the maximal length of a normal
	circuit in \(G\). Then \(G\) is \(4\)-colorable.
\end{theorem}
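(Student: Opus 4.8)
The plan is to realize $G = T(r,s,t)$ as a covering graph of an \emph{unshifted} triangulation and then to pull back a $4$-coloring supplied by Collins and Hutchinson. Write $n = rs$ and set $d \defn \gcd(s,t)$. The vertical normal circuit of $G$ has length $s$ and the horizontal one has length $n/\gcd(s,t) = n/d$, so the hypothesis that $s$ is the maximal normal-circuit length gives $s \geq n/d = rs/d$, that is, $d \geq r$. Assume for the moment that $r \geq 3$ (the case $r = 2$ is the obstacle discussed below). Then $d \geq 3$; moreover $d \mid s$ and $d \mid t$ by the definition of $d$. These two divisibilities are exactly what make the shift invisible after reduction.

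The key step is to check that reducing the second coordinate modulo $d$ is a graph homomorphism $\pi \colon T(r,s,t) \to T(r,d,0)$ onto the unshifted triangulation with $r$ columns and $d$ rows. Concretely I would set $\pi(i,j) \defn (i, 1 + ((j-1) \bmod d))$ and verify that $\pi$ carries each edge of $G$ to an edge of $T(r,d,0)$. For the vertical, horizontal, and diagonal edges in the interior this is immediate, since $\pi$ fixes the first coordinate and merely reduces the second; the vertical wrap survives because $d \mid s$. The edges that actually use the hypotheses are the wrap-around edges between column $r$ and column $1$: in $G$ the vertex $(1,j)$ is joined to $(r, j+t)$ and $(r, j+t+1)$, and since $d \mid t$ these reduce under $\pi$ to $(r, j \bmod d)$ and $(r, (j+1) \bmod d)$, which are precisely the two neighbors of $\pi(1,j)$ across the \emph{unshifted} seam of $T(r,d,0)$; the edges emanating from column $r$ match in the same way. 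Finally, because $r \geq 2$ and $d \geq 2$, none of the six edge vectors of the triangular lattice lies in the lattice generated by $(0,d)$ and $(r,0)$, so $\pi$ never identifies two adjacent vertices and is a genuine homomorphism.

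With $\pi$ in hand the conclusion is quick. By \cite{ColHut99}*{Theorem~3.3} the unshifted triangulation $T(r,d,0)$ is a simple triangulation admitting a proper $4$-coloring $c$ (this is where $r, d \geq 3$ is used). Since the pullback of a proper coloring along a graph homomorphism is always proper, $c \circ \pi$ is a proper $4$-coloring of $G$. This is the precise point at which the maximality of $s$ does its work: it is exactly the inequality $\gcd(s,t) \geq r$ that forces the target $T(r,\gcd(s,t),0)$ to have at least as many rows as columns, hence at least three rows once $r \geq 3$, so that Collins and Hutchinson's unshifted coloring is available.

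The main obstacle I expect is the boundary case $r = 2$, where the target $T(2,d,0)$ has only two columns and so falls outside \cite{ColHut99}*{Theorem~3.3}. The covering argument applies, after passing to an isomorphic representation, precisely when two of the three normal circuits have length at most $n/3$; equivalently, when some shorter circuit can be made to play the role of the vertical cycle while still leaving enough columns and a large enough shift-gcd. When $r = 2$ this can fail only if the median circuit is also long, which forces $n = 2s$ with all three circuits of length $s$ and $\gcd(s,t) = \gcd(s,t+2) = 2$. I would dispatch this restricted residual family by an explicit $4$-coloring, checking directly that the two columns admit a coloring compatible across the shifted seam.
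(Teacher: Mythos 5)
Your main case is correct and is the paper's own argument in different clothing: the paper colors the first $\gcd(s,t)$ rows of $T(r,s,t)$ by a proper $4$-coloring of the unshifted $T(r,\gcd(s,t),0)$ and repeats the block $s/\gcd(s,t)$ times, observing that the seam shift by $t$ is invisible because the coloring has period $\gcd(s,t) \mid t$; that block repetition is exactly your pullback $c \circ \pi$ along the mod-$\gcd(s,t)$ quotient, and your key inequality $\gcd(s,t) \geq r$ from maximality of $s$ is also the paper's first line. For $r = 2$ and $s$ odd your re-representation is likewise the paper's: both $\gcd(s,t)$ and $\gcd(s,t+2)$ are then odd and $\geq 2$, hence $\geq 3$, so $G$ can be rewritten as $T(r',s',t')$ with $r' = \gcd(s,t) \geq 3$ and $\gcd(s',t') = \gcd(s,t+2) \geq 3$ and the same quotient argument applies.

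The genuine gap is your characterization of the residual $r = 2$ family. You claim the covering criterion (two normal circuits of length at most $n/3$) can fail only when all three circuits have length $s$ and $\gcd(s,t) = \gcd(s,t+2) = 2$. That is false: failure only requires \emph{one} of the two gcds to equal $2$ (maximality of $s$ already forces both to be $\geq 2$), and the other may be any larger even number. Concretely, $G = T(2,8,2)$ is a simple triangulation with normal circuits of lengths $8, 8, 4$, so $s = 8$ is maximal and no representation has both $r' \geq 3$ and $\gcd(s',t') \geq 3$ (only one circuit has length $\leq 16/3$); yet $\gcd(s,t+2) = 4$, so this graph — and the infinite family $T(2, 4k, 2)$, among others — lies outside the residual family you propose to dispatch, and your proof as written never colors it. Moreover, even for your restricted family the explicit coloring is left as a promissory note. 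The repair is exactly the paper's dispatch: if the criterion fails at $r = 2$, then some gcd equals $2$, which forces $s$ even (if $s$ were odd both gcds would be odd and $\geq 3$); and for every even $s$ and every shift $t$, coloring column $C_1$ alternately with $\{1,2\}$ and column $C_2$ alternately with $\{3,4\}$ is proper, since each column is an even cycle and every remaining edge joins the two columns, whose color sets are disjoint — the shifted seam needs no checking at all.
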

\begin{proof}
	Note that the conditions on \(r\) and \(s\) imply that \(\gcd(s, t), \gcd(s, r + t) \geq r \geq 2\).
	
	Suppose that \(r \geq 3\). Now, use a proper coloring of \(T(r, \gcd(s, t), 0)\)
	to color the first \(\gcd(s, t)\)-many rows of \(T(r, s, t)\), and repeat
	this coloring block \((s/\gcd(s, t))\)-many times to get a coloring
	of \(T(r, s, t)\). For this coloring to be proper, the coloring on the column \(C_{r}\)
	is required to be compatible with an upward shift by \(t\) vertices of the
	coloring on the column \(C_{1}\). But, since the coloring on the column \(C_{1}\)
	is periodic with period \(\gcd(s, t)\), an upward shift by \(t\) vertices
	of the coloring on \(C_{1}\) is the same as no shift. Thus, we only need
	to check that the coloring on \(C_{r}\) is compatible with that on \(C_{1}\),
	and this holds since it is obtained from a periodic coloring of the unshifted
	triangulation \(T(r, \gcd(s, t), 0)\).
	
	Next, suppose that \(r = 2\). As observed in \cite{ColHut99}*{Theorem~3.8},
	if \(s\) is even, then \(G\) is \(4\)-colorable,
	simply by \(2\)-coloring the columns \(C_{1}\) and \(C_{2}\)
	with the colors \(\Set{1, 2}\) and \(\Set{3, 4}\), respectively.
	
	So, suppose that \(r = 2\) and that \(s\) is odd, which imply that \(\gcd(s, t), \gcd(s, t + 2) \geq 3\).
	Thus, \(G\) is isomorphic
	to \(T(r', s', t')\) for \(r' = \gcd(s, t)\), \(s' = 2s/\gcd(s, t)\),
	and some \(0 \leq t' < s'\) such that \(\gcd(s', t') = \gcd(s, t + 2) \geq 3\).
	This is possible by the remarks following the statement of \cref{T:Altshuler}
	in \cref{S:Introduction}.
	Thus, we can repeat the previous algorithm to color \(T(r', s', t')\)
	as follows. First, use a proper coloring of
	\(T(r', \gcd(s', t'), t')\) to color the first \(\gcd(s', t')\)-many rows
	of \(T(r', s', t')\). Then, repeat this coloring block \((s'/\gcd(s', t'))\)-many
	times to get a coloring of \(T(r', s', t')\). This is verified to be a proper coloring
	for the same reason as in the case \(r \geq 3\),
	so this completes the case \(r = 2\) as well.
\end{proof}

In fact, the proof in the case \(r = 2\) and \(s\) odd shows that the following theorem is also true.

\begin{theorem}\label{T:part2}
	Let \(G = (V, E)\) be a simple \(6\)-regular triangulation on the torus
	with normal circuits of lengths \(a \geq b \geq c\)
	such that \(\frac{n}{c} \geq \frac{n}{b} \geq 3\), where \(\card{V} = n\). Then, \(G\) is \(4\)-colorable.
\end{theorem}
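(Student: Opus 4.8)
The plan is to reduce to the coloring procedure used in the \(r \geq 3\) case of \cref{T:part1}. That procedure produces a proper \(4\)-coloring of any representation \(T(r,s,t)\) of \(G\) for which \(r \geq 3\) and \(\gcd(s,t) \geq 3\): one colors the first \(\gcd(s,t)\) rows by a proper \(4\)-coloring of the unshifted block \(T(r,\gcd(s,t),0)\) (which exists by \cite{ColHut99}*{Theorem~3.3}, as both parameters are at least \(3\)) and repeats this block \(\frac{s}{\gcd(s,t)}\) times, the wraparound being consistent because \(\gcd(s,t) \mid t\) turns the required upward shift by \(t\) into a whole number of periods. Crucially, the maximality of \(s\) assumed in \cref{T:part1} was used there only to deduce \(\gcd(s,t) \geq r \geq 3\); the verification itself needs nothing beyond \(r \geq 3\) and \(\gcd(s,t) \geq 3\). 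So it suffices to produce, among the representations of \(G\), one in which both \(r\) and \(\gcd(s,t)\) are at least \(3\).

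First I would translate the hypothesis into the language of these representations. For any \(T(r,s,t) \cong G\) the three quantities \(r\), \(\gcd(s,t)\), and \(\gcd(s,r+t)\) are the co-lengths \(\frac{n}{a}\), \(\frac{n}{b}\), \(\frac{n}{c}\) of the three normal circuits, and as circuit-length invariants they do not depend (as a multiset) on the chosen representation. Since \(a \geq b \geq c\) gives \(\frac{n}{a} \leq \frac{n}{b} \leq \frac{n}{c}\), the assumption \(\frac{n}{b} \geq 3\) says exactly that the two largest co-lengths, \(\frac{n}{b}\) and \(\frac{n}{c}\), are both at least \(3\). The goal is therefore to route these two co-lengths into the slots \(r\) and \(\gcd(s,t)\).

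This is achieved using the two isomorphisms recorded after \cref{T:Altshuler}. Choosing the normal circuit of length \(b\) to be the vertical one yields a representation with \(s = b\) and \(r = \frac{n}{b} \geq 3\); the remaining co-lengths \(\frac{n}{a}\) and \(\frac{n}{c}\) are then the values of \(\gcd(s,t)\) and \(\gcd(s,r+t)\) in some order. If necessary I apply the horizontal--diagonal swap \(T(r,s,t) \cong T(r,s,t')\) with \(t' \equiv -r-t \pmod{s}\), which interchanges these two because \(\gcd(s,-r-t) = \gcd(s,r+t)\), so that \(\gcd(s,t) = \frac{n}{c} \geq 3\). Now \(r \geq 3\) and \(\gcd(s,t) \geq 3\), and the procedure above colors \(G\). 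The only delicate point is this routing step: one must make sure the two co-lengths that are at least \(3\) can be placed simultaneously in the \(r\)-slot and the \(\gcd(s,t)\)-slot rather than one being trapped in the unconstrained \(\gcd(s,r+t)\)-slot, and it is precisely the swap \(t \mapsto t'\) that guarantees this. This is the same maneuver \cref{T:part1} uses in the case \(r = 2\) with \(s\) odd, where the two large co-lengths are \(\gcd(s,t)\) and \(\gcd(s,t+2)\); the present statement is its natural generalization.
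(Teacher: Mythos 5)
Your proof is correct and takes essentially the same approach as the paper: both arguments reduce to repeating a Collins--Hutchinson \(4\)-coloring of an unshifted block whose two parameters are the co-lengths \(\frac{n}{b}\) and \(\frac{n}{c}\) (both \(\geq 3\)), routed into the \(r\)-slot and the \(\gcd(s,t)\)-slot by choosing which normal circuit is vertical and, if needed, applying the horizontal--diagonal swap \(t \mapsto t' \equiv -r - t \pmod{s}\). The only (immaterial) difference is the transposed routing: the paper makes the circuit of length \(c\) vertical, viewing \(G\) as \(T(\frac{n}{c}, c, t)\) with \(\gcd(c, t) = \frac{n}{b}\) and repeating a coloring of \(T(\frac{n}{c}, \frac{n}{b}, 0)\), whereas you make the circuit of length \(b\) vertical and repeat a coloring of \(T(\frac{n}{b}, \frac{n}{c}, 0)\) --- and you spell out the swap step that the paper leaves implicit.
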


Specifically, a coloring of \(G\) can be found by viewing \(G\) as
\(T(\frac{n}{c}, c, t)\), where \(0 \leq t < c\) is such that
\(\gcd(c, t) = \frac{n}{b}\), and then coloring \(G\) by repeating
\((c/\gcd(c, t))\)-many times a proper coloring of \(T(\frac{n}{c}, \frac{n}{b}, 0)\).
This can be done since it is assumed that \(\frac{n}{c} \geq \frac{n}{b} \geq 3\).

\subsection*{Proof of \texorpdfstring{\cref{T:main}}{Theorem~1.3}}

Suppose that \(G = T(r, s, t)\) is a simple \(6\)-regular triangulation on the torus
with normal circuits of lengths \(a \geq b \geq c\)
such that \((\frac{n}{a}, \frac{n}{b}) \neq (1, 1), (1, 2)\), where \(n = rs\).

If \(\frac{n}{a} = 1\), then \(3 \leq \frac{n}{b} \leq \frac{n}{c}\),
so \(G\) is \(4\)-colorable by \cref{T:part2}.

If \(\frac{n}{a} = 2\), then \(G\) is isomorphic to \(T(2, a, t)\) for some \(0 \leq t < a\).
If \(\frac{n}{b} = 2\), then \(a\) is even, since \(b = n/\gcd(a, t)\) or \(n/\gcd(a, t + 2)\).
So \(G\) is \(4\)-colorable by \cref{T:part1}.
If \(\frac{n}{b} \geq 3\), then \(\frac{n}{c} \geq 3\), so again we are done
by \cref{T:part2}.

If \(\frac{n}{a} \geq 3\), then \(G\) is isomorphic to \(T(r, a, t)\)
for some \(0 \leq t < a\),
where \(r \geq 3\). So, we are done by \cref{T:part1}. This completes
the proof of \cref{T:main}.

\section{Summary of the colorability of \texorpdfstring{\(6\)}{6}-regular toroidal triangulations}\label{S:conclusion}

In this section, we shall present a complete picture of the colorings
of \(6\)-regular triangulations on the torus.

By the results in \cref{S:main}, we are left to classify the colorability of those \(6\)-regular
triangulations \(G\) that are either loopless multigraphs,
or isomorphic to some simple \(T(r, s, t)\) having normal circuits of lengths \(a \geq b \geq c\) such that
\((\frac{n}{a}, \frac{n}{b}) = (1, 1)\) or \((1, 2)\).

\subsection{The loopless multigraphs of the form \texorpdfstring{\(T(1, s, t)\)}{T(1, s, t)}}

Note that the graphs
\(T(1, s, t)\) and \(T(1, s, s - t - 1)\) are isomorphic by the remarks
following the statement of \cref{T:Altshuler} in \cref{S:Introduction}. So,
when \(r = 1\) we shall only focus on the values of \(t\) in the range \(0 \leq t \leq \floor{(s - 1) / 2}\).

Now, it is easy to check that \(T(1, s, t)\) has loops if and only if either \(s \leq 2\) or \(t = 0\),
and that \(T(1, s, t)\) is a loopless multigraph if and only if \(s \geq 3\) and \(t = 1, \floor{(s - 1) / 2}\).

So, we start by considering the graph \(T(1, s, 1)\) for \(s \geq 3\).
Collins and Hutchinson~\cite{ColHut99} gave explicit \(4\)-colorings of \(T(1, s, 1)\)
for \(s > 5\). Furthermore, Yeh and Zhu~\cite{YehZhu03}*{Theorem~6} observed that \(T(1, s, 1)\) is
\(3\)-chromatic if and only if \(s \equiv 0 \pmod{3}\) (after deleting
the duplicated edges in \(T(1, s, 1)\), this graph
is isomorphic to \(G_{s}[1, 2]\) in their notation).
Lastly, one can see that the graph \(T(1, 5, 1)\)
is isomorphic to \(K_{5}\) after deleting the duplicated
edges, so it is \(5\)-chromatic.

Next, we consider the graph \(T(1, s, \floor{(s - 1) / 2})\) for \(s \geq 5\).
For \(s = 2k + 1\) (\(k \geq 2\)), Yeh and Zhu~\cite{YehZhu03}*{Theorem~6} have shown that this graph is isomorphic
to \(T(1, s, 1)\) (after deleting the duplicated edges in \(T(1, s, k)\)
for \(s = 2k + 1\), this graph is isomorphic to the graph \(G_{s}[1, k]\) in their notation).
Hence, \(T(1, 2k + 1, k)\) is \(4\)-colorable for all \(k \geq 3\), and is
\(3\)-chromatic if and only if \(s \equiv 0 \equiv k - 1 \pmod{3}\), and
\(T(1, 5, 2)\) is \(5\)-chromatic since it is isomorphic to \(K_{5}\)
after deleting the duplicated edges.

For \(s = 2k + 2\) (\(k \geq 2\)), Yeh and Zhu~\cite{YehZhu03}*{Theorem~5} have shown
that \(T(1, s, \floor{(s - 1)} / 2)\) is \(4\)-colorable (and in fact \(4\)-chromatic)
if and only if \(s \equiv 0 \pmod{4}\)
(this graph is isomorphic to \(G_{s}[1, k, k + 1]\) in their notation). In this case, by removing
the duplicated edges we get a \(5\)-regular graph on the torus. So, by Brooks's theorem~\cite{Bro41},
when \(k \geq 4\) is even the graph \(T(1, 2k + 2, k)\) is \(5\)-chromatic, and
when \(k = 2\) the graph \(T(1, 6, 2)\) is isomorphic to \(K_{6}\) after deleting
the duplicated edges, and hence is \(6\)-chromatic.

\subsection{The loopless multigraphs of the form \texorpdfstring{\(T(2, s, t)\)}{T(2, s, t)}}

The graph \(T(2, s, t)\) has loops
if and only if \(s = 1\), so we assume that \(s \geq 2\). One can check that
\(T(2, s, t)\) is a loopless multigraph if and only if \(t = 0\), \(s - 2\), or \(s - 1\).
Furthermore, \(T(2, s, 0)\) and \(T(2, s, s - 2)\) are isomorphic, from the remarks following
the statement of \cref{T:Altshuler} in \cref{S:Introduction}, so there are only two cases to consider.

As observed in \cite{ColHut99}*{Theorem~3.8}, \(T(2, s, 0)\) is \(4\)-colorable
(and in fact \(4\)-chromatic) if and only if \(s \geq 2\) is even.
When \(s \geq 3\) is odd, \(T(2, s, 0)\) is isomorphic to \(T(1, 2s, \floor{(s - 1) / 2})\),
which was discussed earlier.
Next, we look at \(T(2, s, s - 1)\). This graph is isomorphic to \(T(1, 2s, 1)\) for all \(s \geq 2\),
which we have discussed earlier. So, this completes the case \(r = 2\).

\subsection{The loopless multigraphs of the form \texorpdfstring{\(T(r, s, t)\)}{T(r, s, t)} for \texorpdfstring{\(r \geq 3\)}{r >= 3}}

The graph \(T(r, s, t)\) for \(r \geq 3\)
has loops if and only if \(s = 1\), and it is a loopless multigraph
if and only if \(s = 2\). When \(t = 0\), the graph \(T(r, 2, 0)\)
is isomorphic to \(T(2, r, 0)\), which we have discussed earlier.
When \(t = 1\), the graph \(T(r, 2, 1)\) is isomorphic to \(T(1, 2r, \floor{(r - 1) / 2})\),
which was also discussed earlier.

Thus, the colorability of all the loopless multigraphs \(T(r, s, t)\) are known.

Next, we need to consider the colorability of the simple graphs \(T(r, s, t)\).
\Cref{T:main} covers the \(4\)-colorability of those \(T(r, s, t)\) that have normal
circuits of lengths \(a \geq b \geq c\) such that \((\frac{n}{a}, \frac{n}{b}) \neq (1, 1), (1, 2)\),
where \(n = rs\). So, we are only left to consider the remaining cases, namely
when \((\frac{n}{a}, \frac{n}{b}) = (1, 1)\)
or \((1, 2)\). As a step towards that, let us first consider the colorability of the
simple graphs of the form \(T(1, s, t)\).

\subsection{The simple graphs \texorpdfstring{\(T(1, s, t)\)}{T(1, s, t)}}\label{SS:T(1st)}

From the previous discussions, it suffices to consider the graphs \(T(1, s, t)\)
for those values of \(t\) in the range \(2 \leq t \leq \floor{(s - 1) / 2} - 1\). In particular,
we assume that \(s \geq 7\) in what follows.

Now, as shown in~\cite{ColHut99}*{Theorem~3.8} and discussed above in \cref{SS:counter},
the graphs \(T(1, s, 2)\) are simple
triangulations that are \(4\)-colorable (and in fact \(4\)-chromatic) if and only if \(s \equiv 0 \pmod{4}\)
since every four consecutive vertices in \(T(1, s, 2)\) induce a \(K_{4}\).
Collins and Hutchinson~\cite{ColHut99} observe that these grids are all easily
seen to be \(5\)-chromatic when \(s \geq 15\).
Explicit \(5\)-colorings for all \(s \geq 8\), \(s \neq 11\), in the spirit of Collins
and Hutchinson's work, is given as follows:
write \(s = 4u + 5v\) for \(t \geq 0\) and \(v \in \Set{0, 1, 2, 3, 4}\)
(which can be done for all \(s \geq 8\), \(s \neq 11\)),
and color \(T(1, s, 2)\) using \(u\) sets of \(1234\) followed by \(v\) sets of \(12345\). This
is easily seen to be a proper coloring of \(T(1, s, 2)\).

When \(s = 11\), the coloring
\(12345123456\) is seen to work: this is the \(6\)-chromatic graph on \(11\)
vertices found by Albertson and Hutchinson~\cite{AlbHut80}, which is also
the unique simple \(6\)-regular triangulation on the torus having \(11\) vertices,
up to isomorphism.

When \(s = 7\), \(T(1, 7, 2)\) is \(7\)-chromatic since it is isomorphic to \(K_{7}\).

Next, for each \(t \geq 3\), Collins and Hutchinson~\cite{ColHut99}*{Theorem~3.9}
exhibited \(4\)-colorings for all but finitely many of the graphs \(T(1, s, t)\) with \(t \leq \floor{(s - 1)/2} - 1\).
The remaining cases were handled by Yeh and Zhu~\cite{YehZhu03}*{Theorem~5}:
\begin{theorem}[Yeh--Zhu~\cite{YehZhu03}*{Theorem~5}, 2003]\label{T:Yeh-Zhu}
	Let \(G = T(1, s, t)\) be a simple triangulation on the torus, for \(3 \leq t \leq \floor{(s - 1)/2} - 1\).
	Then \(G\) is \(4\)-colorable, unless \(G\) satisfies one of the following conditions:
	\begin{itemize}
		\item \(s \in \Set{2t + 3, 3t + 1, 3t + 2}\) and \(s \not\equiv 0 \pmod{4}\); or
		\item \((t, s) \in \{ (3, 13)\), \((3, 17)\), \((3, 18)\), \((3, 25)\),
		\((4, 17)\), \((6, 17)\), \((6, 25)\), \((6, 33)\),
		\((7, 19)\), \((7, 25)\), \((7, 26)\), \((9, 25)\), \((10, 25)\),
		\((10, 26)\), \((10, 37)\), \((14, 33)\}\).
	\end{itemize}
\end{theorem}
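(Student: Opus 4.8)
The plan is to recast $T(1,s,t)$ as a circulant graph and then split the argument into a positive direction (producing colorings for every non-exceptional pair $(t,s)$) and a negative direction (showing the listed graphs require five colors). Writing each vertex $(1,j)$ as the residue $j \in \mathbb{Z}_{s}$, the adjacency rule for $r = 1$ identifies $T(1,s,t)$ with the circulant graph on $\mathbb{Z}_{s}$ having connection set $\Set{\pm 1, \pm t, \pm(t+1)}$; the hypothesis $3 \leq t \leq \floor{(s-1)/2} - 1$ guarantees that the six neighbors $j \pm 1, j \pm t, j \pm(t+1)$ are pairwise distinct and distinct from $j$, so that $G$ is simple. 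A proper $4$-coloring is then a cyclic word $w_{0}w_{1}\cdots w_{s-1}$ over $\Set{1,2,3,4}$ with $w_{i} \neq w_{i+1}$, $w_{i} \neq w_{i+t}$, and $w_{i} \neq w_{i+t+1}$ for all $i$ (indices mod $s$); equivalently, a homomorphism $G \to K_{4}$. The decisive structural point is that the two large forbidden differences $t, t+1$ are consecutive, so in a left-to-right sweep each new color is constrained only by its immediate predecessor and by the two colors $t$ and $t+1$ steps back. This bounded look-back means proper cyclic colorings are exactly the closed walks of length $s$ in a transfer digraph whose states are the admissible windows of $t+1$ consecutive colors. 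Since the size of this digraph grows with $t$, it is useful for a finite per-instance check but not for a uniform conclusion, which is why the positive direction must rest on explicit families.

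\emph{Positive direction.} I would exhibit $4$-colorings for all $(t,s)$ off the exceptional list by building periodic and near-periodic patterns. The cleanest family is the linear coloring $w_{i} \equiv \alpha i \pmod 4$, which is well defined on $\mathbb{Z}_{s}$ when $4 \mid s$ and is proper exactly when $\alpha,\ \alpha t,\ \alpha(t+1) \not\equiv 0 \pmod 4$; taking $\alpha = 1$ settles $4 \mid s$ with $t \equiv 1, 2 \pmod 4$. The residues $t \equiv 0, 3 \pmod 4$, where a collision at difference $t$ or $t+1$ would be total, demand genuinely $t$-dependent patterns, and here I would assemble $w$ from a small repertoire of short blocks whose cross-block interference occurs only within distance $t+1$ of a seam. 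Given such blocks, a Chicken--McNugget / numerical-semigroup argument shows that all sufficiently large $s$ in each residue class admit a valid cyclic assembly, reducing the positive direction to finitely many residual $(t,s)$ colored by hand. Consistently with the statement, the resonant lengths $s \in \Set{2t+3, 3t+1, 3t+2}$ are precisely the lengths at which seam-compatibility is forced to fail unless $4 \mid s$ provides the extra slack.

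\emph{Negative direction.} For the three infinite families $s \in \Set{2t+3, 3t+1, 3t+2}$ with $s \not\equiv 0 \pmod 4$, I would produce a global obstruction to a homomorphism into $K_{4}$. At these ratios the differences $1, t, t+1$ resonate with $s$ — for instance, when $s = 2t+3$ one has $t+1 = (s-1)/2$, so $2(t+1) \equiv -1 \pmod s$ and the distance-$(t+1)$ relation threads the whole cycle in an odd, rigid way — and I would convert this into a parity count on the color classes, or on one of the three normal circuits of lengths $s$, $s/\gcd(s,t)$, $s/\gcd(s,t+1)$, that is inconsistent modulo $2$ (or modulo $4$) exactly when $s \not\equiv 0 \pmod 4$. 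For the finitely many sporadic pairs in the second bullet, non-$4$-colorability is established by a terminating case analysis, which I would phrase as the verification that the transfer digraph above has no closed walk of length $s$.

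\emph{Main obstacle.} I expect the crux to be the positive direction: finding a block repertoire that works \emph{uniformly in $t$} for the awkward residues $t \equiv 0, 3 \pmod 4$, and reconciling it with the resonant lengths, so that the only surviving failures are exactly the three families with $s \not\equiv 0 \pmod 4$ together with the finite list. The seam bookkeeping — ensuring the distance-$t$ and distance-$(t+1)$ constraints hold where blocks of differing lengths meet cyclically — and pinning down the precise finite exceptional set are the delicate parts; by contrast, once the correct parity invariant is isolated, the obstruction for the infinite families should be comparatively routine.
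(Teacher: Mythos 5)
First, a point of comparison you could not have known: the paper does not prove this theorem at all --- it is quoted verbatim from Yeh--Zhu (2003), so there is no internal proof to match your proposal against. Judged on its own terms, your write-up is a programme rather than a proof, and both of its halves have genuine gaps. In the negative direction, the parity invariant you hope for is never exhibited, and as described it cannot work: a count that is ``inconsistent modulo $2$'' cannot detect the condition $s \not\equiv 0 \pmod 4$, which distinguishes $s \equiv 2$ from $s \equiv 0 \pmod 4$. The actual mechanism for the three infinite families, which the paper records immediately after the theorem statement, is an isomorphism: for $s \in \{2t+3,\, 3t+1,\, 3t+2\}$ the graph $T(1,s,t)$ is isomorphic to $T(1,s,2)$ (multiplying $\mathbb{Z}_s$ by a suitable unit carries the connection set $\{\pm 1, \pm t, \pm(t+1)\}$ to $\{\pm 1, \pm 2, \pm 3\}$; e.g.\ when $s = 2t+3$, multiplication by $2$ sends $t+1 \mapsto -1$, $t \mapsto -3$, $1 \mapsto 2$). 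Then one uses the rigidity fact the paper states in Section 3.8's discussion: every four consecutive vertices of $T(1,s,2)$ induce a $K_4$, so any proper $4$-coloring is forced to be periodic with period $4$, whence $T(1,s,2)$ is $4$-colorable if and only if $4 \mid s$. Your ``resonance'' observation $2(t+1) \equiv -1 \pmod s$ when $s = 2t+3$ is exactly the germ of this unit-multiplier isomorphism, but you stopped short of drawing the right conclusion from it.

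In the positive direction the gap is larger: you never produce the block repertoire for $t \equiv 0, 3 \pmod 4$, and the numerical-semigroup argument you invoke only yields $4$-colorings for all \emph{sufficiently large} $s$ in each residue class, with an effective bound depending on $t$. That is structurally incapable of delivering the theorem's content, which is an \emph{exact} exceptional set: three explicit families plus sixteen sporadic pairs such as $(10,37)$ and $(14,33)$. Pinning down that list requires either uniform-in-$t$ constructions with small, explicit seam defects (this is where the real work in Yeh--Zhu lies) or per-instance verification for a region of $(t,s)$ that your sketch does not delimit. Your transfer-digraph formulation is sound and does legitimately reduce each individual $(t,s)$ to a finite check (closed walks of length $s$ in the digraph of admissible windows of $t+1$ colors), so the sporadic non-colorability claims are verifiable in principle; but as the digraph has on the order of $4^{t+1}$ states, this is only a finite-case tool, as you yourself note, and it cannot substitute for the missing uniform construction. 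In short: the skeleton (circulant reformulation, split into construction and obstruction, finite checks for sporadic cases) is reasonable, but the two load-bearing steps --- the isomorphism-plus-$K_4$-chain obstruction and the explicit colorings with exact bookkeeping of failures --- are absent.
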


Yeh and Zhu have also shown that the graphs \(T(1, s, t)\) for \(s \in \Set{ 2t + 3,
3t + 1, 3t + 2}\) are in fact isomorphic to \(T(1, s, 2)\).
Note that \(T(1, s, t)\) is isomorphic to the graph
\(G_{s}[1, t, t + 1]\) in Yeh and Zhu's notation.

\subsection{The simple graphs \texorpdfstring{\(T(r, s, t)\)}{T(r, s, t)} with normal circuits
of lengths \texorpdfstring{\(a \geq b \geq c\)}{a >= b >= c} such that
\texorpdfstring{\((\frac{n}{a}, \frac{n}{b}) = (1, 1)\)}{(n/a, n/b) = (1, 1)} or \texorpdfstring{\((1, 2)\)}{(1, 2)}}

Let \(G = (V, E)\) be a simple \(6\)-regular triangulation on the torus
with \(\card{V} = n\). Suppose that \(G\) has normal circuits of lengths
\(a \geq b \geq c\) such that \((\frac{n}{a}, \frac{n}{b}) = (1, 1)\)
or \((1, 2)\). Then, \(G\) can be represented as \(T(1, s, t)\),
and by the discussion in \cref{SS:T(1st)} we know exactly what values \(t\) can take
if \(G\) is \(5\)-chromatic. Thus, to classify the \(5\)-chromatic graphs \(G\)
satisfying \((\frac{n}{a}, \frac{n}{b}) = (1, 1)\) or \((1, 2)\),
it suffices to consider the \(5\)-chromatic graphs of the form \(T(1, s, t)\)
discussed in \cref{SS:T(1st)}
and see whether and how they can be represented as \(T(r', s', t')\) with \(r' > 1\).

This was done for the graphs \(T(1, 3s, 2)\) in \cref{S:prelim}.
A similar analysis can be done for the graphs \(T(1, 2s, 2)\) with \(s \equiv 0 \pmod{2}\)
to show that \(T(2, s, 1)\) and \(T(2, s, s - 3)\) are \(5\)-chromatic for all
odd \(s \geq 5\). These are the only cases that arise from the graphs \(T(1, s, 2)\)
for \(s \not\equiv 0 \pmod{4}\), since its normal circuits have lengths \(s\), \(s/\gcd(s, 2)\),
and \(s/\gcd(s, 3)\). This also covers the graphs \(T(1, s, t)\)
for \(s \in \Set{ 2t + 3, 3t + 1, 3t + 2}\), \(s \not\equiv 0 \pmod{4}\), \(t \geq 3\), since
they are all isomorphic to \(T(1, s, 2)\), as shown by Yeh and Zhu~\cite{YehZhu03}.

Thus, it only remains to consider the finitely many exceptional graphs
listed in the second bullet point in \cref{T:Yeh-Zhu} that have composite order.
A similar analysis can be done for these graphs as was done in \cref{S:prelim} for \(T(1, 3s, 2)\).
We omit the details and only state the final results in the next theorem.
Just one observation needs to be added before we do so: it is easy to show that a simple
graph \(T(r, s, t)\) is \(3\)-chromatic if and only if \(s \equiv 0 \equiv r - t \pmod{3}\).

We conclude this paper with a compilation of the known results
from the previous work of \cites{Hea90,Dir52c,AlbHut80,ColHut99,YehZhu03}
as well as the present work, which characterizes the colorability
of all the \(6\)-regular toroidal triangulations. We follow
the convention as adopted by \cites{ColHut99,YehZhu03} to specify
the classification by the parameters \(r, s, t\) instead of
only listing isomorphism classes of graphs.

\begin{theorem}\label{T:final}
	Let \(G = T(r, s, t)\) for \(r \geq 1\), \(s \geq 1\), \(0 \leq t \leq s - 1\)
	be a \(6\)-regular triangulation on the torus.
	If \(r = 1\), then \(T(1, s, t)\) is isomorphic to \(T(1, s, s - t - 1)\),
	so in this case consider \(t\) only in the range \(0 \leq t \leq \floor{(s - 1) / 2}\).
	\begin{enumerate}[(a)]
		\item \(G\) contains loops if and only if either \(s = 1\), or \(r = 1\) and \(s = 2\),
		or \(r = 1\) and \(t = 0\);
		
		\item \(G\) is \(7\)-chromatic if and only if \(G\) is isomorphic to \(K_{7}\),
		and this happens only when \(G = T(1, 7, 2)\);
		
		\item \(G\) is \(6\)-chromatic if and only if \(G\) is isomorphic either to \(K_{6}\)
		(after deleting duplicated edges), or to the graph of Albertson and Hutchinson~\cite{AlbHut80}
		on \(11\) vertices. The former happens only when \(G \in \{ T(1, 6, 2)\), \(T(2, 3, 0)\), \(T(2, 3, 1)\), \(T(3, 2, 0)\), \(T(3, 2, 1)\}\)
		and the latter only when \(G \in \{ T(1, 11, 2)\), \(T(1, 11, 3)\), \(T(1, 11, 4) \}\);
		
		\item \(G\) is \(5\)-chromatic if and only if \(G\) is one of the following graphs:
		\begin{itemize}
			\item \(T(1, 5, 1)\), \(T(1, 5, 2)\) (these are isomorphic to \(K_{5}\)
			after deleting duplicated edges)
			
			\item \(T(1, s, 2)\) for \(s \geq 9\), \(s \neq 11\), \(s \not\equiv 0 \pmod{4}\)
			
			\item \(T(1, s, t)\) for \(s \in \{ 2t + 2\), \(2t + 3\), \(3t + 1\), \(3t + 2 \}\),
			\(s \geq 9\), \(s \not\equiv 0 \pmod{4}\)
			
			\item \(T(2, s, 0)\), \(T(2, s, 1)\), \(T(2, s, s - 3)\), \(T(2, s, s - 2)\) for odd \(s \geq 5\)
			
			\item \(T(3, s, s - 2)\), \(T(3, s, s - 1)\) for \(s \geq 3\), \(s \not\equiv 0 \pmod{4}\)
			
			\item \(T(r, 2, 0)\), \(T(r, 2, 1)\) for odd \(r \geq 5\)
			
			\item \(T(1, s, t)\) for \((t, s) \in \{ (3, 13)\), \((3, 17)\), \((3, 18)\), \((3, 25)\),
			\((4, 17)\), \((6, 17)\), \((6, 25)\), \((6, 33)\),
			\((7, 19)\), \((7, 25)\), \((7, 26)\), \((9, 25)\), \((10, 25)\),
			\((10, 26)\), \((10, 37)\), \((14, 33)\}\)
			
			\item \(T(2, s, t)\) for \((t, s) \in \{ (3, 9)\), \((3, 13)\), \((4, 9)\), \((8, 13) \}\)
			
			\item \(T(3, s, t)\) for \((t, s) \in \{ (1, 6)\), \((2, 6)\), \((2, 11)\), \((6, 11) \}\)
			
			\item \(T(5, s, t)\) for \((t, s) \in \{ (2, 5)\), \((3, 5) \}\)
		\end{itemize}
		
		\item \(G\) is \(4\)-colorable in all other cases;
		
		\item In particular, \(G\) is \(3\)-chromatic if and only if \(s \equiv 0 \equiv r - t \pmod{3}\).
	\end{enumerate}
\end{theorem}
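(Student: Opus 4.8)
The plan is to prove \cref{T:final} as a synthesis of the results quoted throughout the excerpt, so that the real content is organizational: every triple $(r,s,t)$ must be sorted into exactly one chromatic class, and the several normal-circuit representations of a single graph must be reconciled. I would open with the global upper bound. Heawood's theorem gives $\chr(G) \leq 7$, and Dirac's map color theorem pins the $7$-chromatic case to $K_{7}$; checking directly that $K_{7} \cong T(1,7,2)$ and that no other triple realizes $K_{7}$ yields (b). For (c), I would quote Albertson and Hutchinson's uniqueness of the $11$-vertex simple triangulation, separately identify the $K_{6}$ case among the loopless multigraphs, and then list the realizing triples using the isomorphisms from the remarks after \cref{T:Altshuler} together with the normalization $0 \leq t \leq \floor{(s-1)/2}$ imposed when $r = 1$. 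Part (a) is a direct inspection of the adjacency rules: a loop occurs precisely when one of the prescribed neighbors of $(i,j)$ coincides with $(i,j)$ modulo $r$ and $s$, which happens exactly in the three listed regimes.

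The core reduction rests on \cref{T:main}: every \emph{simple} $T(r,s,t)$ whose normal-circuit lengths $a \geq b \geq c$ satisfy $(\frac{n}{a}, \frac{n}{b}) \neq (1,1),(1,2)$ is $4$-colorable. Consequently the only graphs that can fail to be $4$-colorable are the loopless multigraphs and the simple graphs with $(\frac{n}{a}, \frac{n}{b}) = (1,1)$ or $(1,2)$. I would clear the loopless multigraphs first, along the case split $r = 1$, $r = 2$, $r \geq 3$ of the preceding subsections: after deleting duplicated edges these collapse either to $K_{5}$ or $K_{6}$, or to a $5$-regular toroidal graph of Yeh--Zhu, whose chromatic number is then read off from Brooks's theorem and a handful of explicit colorings. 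For the simple graphs in the critical family, the condition $(\frac{n}{a}, \frac{n}{b}) \in \Set{(1,1),(1,2)}$ forces a representation $T(1,s,t)$, so their colorability is governed entirely by the $T(1,s,t)$ classification assembled in \cref{SS:T(1st)}, that is, the Collins--Hutchinson colorings supplemented by \cref{T:Yeh-Zhu}.

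The step I expect to be the genuine obstacle is \emph{translating} each non-$4$-colorable $T(1,s,t)$ into all of its representations $T(r',s',t')$ with $r' > 1$, because (c) and (d) are stated over every parameter triple rather than over isomorphism classes. For a fixed such graph one computes the three normal-circuit lengths $s$, $\frac{s}{\gcd(s,t)}$, and $\frac{s}{\gcd(s,t+1)}$, decides which may be promoted to the new second parameter $s'$ (so that $r' = n/s'$), and then recovers the shift $t'$ by tracing vertex labels along the chosen vertical circuit---exactly the computation performed for $T(1,3s,2)$ in \cref{S:prelim} and indicated for $T(1,2s,2)$. Carried out uniformly, the infinite families $T(1,s,2)$ and $T(1,s,t)$ with $s \in \Set{2t+3, 3t+1, 3t+2}$ generate the $T(2,s,t)$ and $T(3,s,t)$ rows of (d), while the composite-order members of the sixteen exceptional triples of \cref{T:Yeh-Zhu} must be translated one at a time to produce the remaining finite $T(2,s,t)$, $T(3,s,t)$, and $T(5,s,t)$ entries. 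This bookkeeping is exactly what the text defers with ``we omit the details''.

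It remains to establish the $3$-chromatic characterization (f), which I would prove directly rather than quote. The linear assignment $(i,j) \mapsto (\alpha i + \beta j) \bmod 3$ is proper on all three edge directions exactly when $\alpha$, $\beta$, and $\alpha - \beta$ are nonzero modulo $3$, forcing $\Set{\alpha,\beta} = \Set{1,2}$; periodicity in the second coordinate then demands $s \equiv 0 \pmod{3}$, and consistency across the shifted identification of the first and last columns demands $r - t \equiv 0 \pmod{3}$. This proves sufficiency, and a standard rigidity argument for triangulations---once a single triangular face is $3$-colored, the colors propagate uniquely from face to adjacent face---shows that any proper $3$-coloring must be of this linear form, so the two congruences are necessary as well. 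With (a)--(d) and (f) in place, part (e) is the complement: any triple not already assigned to a higher class or certified $3$-chromatic is $4$-colorable, and the partition is exhaustive because each graph falls under exactly one branch of the reduction.
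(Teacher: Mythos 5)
Your proposal is correct and follows essentially the same route as the paper: \cref{T:main} reduces everything to the loopless multigraphs and the simple graphs representable as \(T(1,s,t)\), whose classification is quoted from Collins--Hutchinson and Yeh--Zhu, with the remaining work being exactly the translation of each non-\(4\)-colorable \(T(1,s,t)\) into its alternative representations \(T(r',s',t')\) via normal-circuit lengths, as the paper carries out for \(T(1,3s,2)\) and defers for the rest with ``we omit the details.'' Your only addition is a direct proof of part~(f) via linear colorings and the propagation rigidity of \(3\)-colored triangulations, which is sound and merely fills in what the paper asserts as ``easy to show.''
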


\begin{bibdiv}
\begin{biblist}

\raggedright

\bib{AlbHut80}{article}{
      author={Albertson, Michael~Owen},
      author={Hutchinson, Joan~Prince},
       title={On six-chromatic toroidal graphs},
        date={1980},
        ISSN={0024-6115, 1460-244X/e},
     journal={Proc. Lond. Math. Soc., Third Ser.},
      volume={41},
      number={3},
       pages={533\ndash 556},
         doi={10.1112/plms/s3-41.3.533},
      review={\MR{591654}},
      review={\Zbl{394.05018}}
}

\bib{Alt72}{article}{
      author={Altshuler, Amos},
       title={Hamiltonian circuits in some maps on the torus},
        date={1972},
        ISSN={0012-365X, 1872-681X/e},
     journal={Discrete Math.},
      volume={1},
      number={4},
       pages={299\ndash 314},
         doi={10.1016/0012-365X(72)90037-4},
      review={\MR{297597}},
      review={\Zbl{226.05109}}
}

\bib{Alt73}{article}{
      author={Altshuler, Amos},
       title={Construction and enumeration of regular maps on the torus},
        date={1973},
        ISSN={0012-365X, 1872-681X/e},
     journal={Discrete Math.},
      volume={4},
      number={3},
       pages={201\ndash 217},
         doi={10.1016/S0012-365X(73)80002-0},
      review={\MR{321797}},
      review={\Zbl{253.05117}}
}

\bib{BalSan21}{article}{
      author={Balachandran, Niranjan},
      author={Sankarnarayanan, Brahadeesh},
       title={The choice number versus the chromatic number for graphs embeddable on orientable surfaces},
        date={2021},
     journal={Electron. J. Comb.},
      volume={28},
      number={4},
       pages={\#P4.50},
         doi={10.37236/10263},
      review={\MR{4394670}},
      review={\Zbl{1486.05101}}
}

\bib{Bro41}{article}{
      author={Brooks, Rowland~Leonard},
       title={On colouring the nodes of a network},
        date={1941},
        ISSN={0008-1981},
     journal={Proc. Camb. Philos. Soc.},
      volume={37},
      number={2},
       pages={194\ndash 197},
         doi={10.1017/S030500410002168X},
      review={\MR{12236}},
      review={\Zbl{27.26403}}
}

\bib{ColHut99}{inproceedings}{
      author={Collins, Karen~Linda},
      author={Hutchinson, Joan~Prince},
       title={Four-coloring six-regular graphs on the torus},
        book={
	         date={1999},
   			title={Graph colouring and applications},
      	   editor={Hansen, Pierre},
      	   editor={Marcotte, Odile},
      	   series={CRM Proc. Lect. Notes},
      	   volume={23},
   		publisher={Am. Math. Soc.},
     	  address={Providence, R.~I.},	
			},
       pages={21\ndash 34},
         doi={10.1090/crmp/023/02},
      review={\MR{1723634}},
      review={\Zbl{944.05044}}
}

\bib{Dir52c}{article}{
      author={Dirac, Gabriel~Andrew},
       title={Map-colour theorems},
        date={1952},
        ISSN={0008-414X, 1496-4279/e},
     journal={Can. J. Math.},
      volume={4},
       pages={480\ndash 490},
         doi={10.4153/cjm-1952-043-9},
      review={\MR{50869}},
      review={\Zbl{47.42203}}
}

\bib{Hea90}{article}{
      author={Heawood, Percy~John},
       title={Map-colour theorem},
        date={1890},
     journal={Q. J. Pure Appl. Math.},
      volume={24},
      number={96},
       pages={332\ndash 338, plate~3},
      eprint={\url{http://resolver.sub.uni-goettingen.de/purl?PPN600494829_0024}},
      review={\JFM{22.0562.02}}
}

\bib{Neg83}{article}{
      author={Negami, Seiya},
       title={Uniqueness and faithfulness of embedding of toroidal graphs},
        date={1983},
        ISSN={0012-365X, 1872-681X/e},
     journal={Discrete Math.},
      volume={44},
      number={2},
       pages={161\ndash 180},
         doi={10.1016/0012-365X(83)90057-2},
      review={\MR{689809}},
      review={\Zbl{508.05033}}
}

\bib{Tho91}{article}{
      author={Thomassen, Carsten},
       title={Tilings of the torus and the {Klein} bottle and vertex-transitive graphs on a fixed surface},
        date={1991},
        ISSN={0002-9947, 1088-6850/e},
     journal={Trans. Am. Math. Soc.},
      volume={323},
      number={2},
       pages={605\ndash 635},
         doi={10.2307/2001547},
      review={\MR{1040045}},
      review={\Zbl{722.05031}}
}

\bib{YehZhu03}{article}{
      author={Yeh, Hong-Gwa},
      author={Zhu, Xuding},
       title={$4$-colorable $6$-regular toroidal graphs},
        date={2003},
        ISSN={0012-365X, 1872-681X/e},
     journal={Discrete Math.},
      volume={273},
      number={1--3},
       pages={261\ndash 274},
         doi={10.1016/S0012-365X(03)00242-5},
      review={\MR{2025955}},
      review={\Zbl{1034.05024}}
}

\end{biblist}
\end{bibdiv}

\end{document}